\newtheorem{thm}{Theorem}[section]
\newtheorem{conjecture}[thm]{Conjecture}
\newtheorem{prop}[thm]{Proposition}
\newtheorem{cor}[thm]{Corollary}
\theoremstyle{remark}
    \newtheorem{rem}[thm]{Remark}
\theoremstyle{definition}
    \newtheorem{defn}[thm]{Definition}
\theoremstyle{question}
\theoremstyle{example}
\theoremstyle{examples}
\def\ds{\displaystyle}
\numberwithin{equation}{section}
\def\F {{\mathbb F}}
\def\Z {{\mathbb Z}}
\def\N{{\mathcal N}}
\def\Q {{\mathbb Q}}
\def\T {{\tilde T}}
\def\B {{\mathcal B}}
\def\C {{\mathcal C}}
\def\D {{\mathcal D}}
\def\H {{\mathcal H}}
\def\M {{\mathcal M}}
\def\h {{\mathfrak h}}
\def\a {{\mathfrak a}}
\def\l {\left\langle}
\def\r {\right\rangle}
\def\bip{{\mathrm{Bip}}}
\def\red#1 {\textcolor{red}{#1 }}
\def\blue#1 {\textcolor{blue}{#1 }}
\begin{document}

\title[Polynomial Cunningham Chains]{Polynomial Cunningham Chains}
\author{Lenny Jones}
\address{Department of Mathematics, Shippensburg University, Pennsylvania, USA}
\email[Lenny~Jones]{lkjone@ship.edu}
\date{\today}

\def\ds{\displaystyle}

\numberwithin{equation}{section}
\def\F {{\mathbb F}}
\def\Z {{\mathbb Z}}
\def\N{{\mathbb N}}
\def\Q {{\mathbb Q}}
\def\T {{\tilde T}}
\def\B {{\mathcal B}}
\def\C {{\mathcal C}}
\def\D {{\mathcal D}}
\def\H {{\mathcal H}}
\def\M {{\mathcal M}}
\def\h {{\mathfrak h}}
\def\a {{\mathfrak a}}
\def\l {\left\langle}
\def\r {\right\rangle}
\def\bip{{\mathrm{Bip}}}

\begin{abstract}
Let $\epsilon\in \{-1,1\}$. A sequence of prime numbers $p_1, p_2, p_3, \ldots$, such that $p_i=2p_{i-1}+\epsilon$ for all $i$, is called a {\it Cunningham chain} of the first or second kind, depending on whether $\epsilon =1$ or $-1$ respectively. If $k$ is the smallest positive integer such that $2p_k+\epsilon$ is composite, then we say the chain has length $k$. Although such chains are necessarily finite, it is conjectured that for every positive integer $k$, there are infinitely many Cunningham chains of length $k$. A sequence of polynomials $f_1(x), f_2(x), \ldots $, such that $f_i(x)\in \Z[x]$, $f_1(x)$ has positive leading coefficient, $f_i(x)$ is irreducible in $\Q[x]$, and $f_i(x)=xf_{i-1}(x)+\epsilon$ for all $i$, is defined to be a {\it polynomial Cunningham chain} of the first or second kind, depending on whether $\epsilon =1$ or $-1$ respectively. If $k$ is the least positive integer such that $f_{k+1}(x)$ is reducible over $\Q$, then we say the chain has length $k$.  In this article, for chains of each kind, we explicitly give infinitely many polynomials $f_1(x)$, such that $f_{k+1}(x)$ is the only term in the sequence $\{f_i(x)\}_{i=1}^{\infty}$ that is reducible. As a first corollary, we deduce that there exist infinitely many polynomial Cunningham chains of length $k$ of both kinds, and as a second corollary, we have that, unlike the situation in the integers, there exist infinitely many polynomial Cunningham chains of infinite length of both kinds.
\end{abstract}

\subjclass[2010]{Primary 11C08; Secondary 11B83}
\keywords{irreducible polynomials, Cunningham chains, sequences}

\maketitle

\section{Introduction}\label{Intro}

We begin by giving the definitions of standard Cunningham chains \cite{Ribenboim}.
\begin{defn}\label{Def:Basic}
 Let $\epsilon\in \{-1,1\}$.  A sequence of prime numbers $p_1, p_2, p_3, \ldots$, such that $p_i=2p_{i-1}+\epsilon$ for all $i$, is called a {\it Cunningham chain} of the first or second kind, depending on whether $\epsilon =1$ or $-1$ respectively. If $k$ is the smallest positive integer such that $2p_k+\epsilon$ is composite, then we say the chain has length $k$.
\end{defn}

We define polynomial Cunningham chains in a similar manner.

\begin{defn}\label{Def:Poly}
  A sequence of polynomials $f_1(x), f_2(x), \ldots $, such that $f_i(x)\in \Z[x]$, $f_1(x)$ has positive leading coefficient, $f_i(x)$ is irreducible over $\Q$, and $f_i(x)=xf_{i-1}(x)+\epsilon$ for all $i$, is called a {\it polynomial Cunningham chain} of the first or second kind, depending on whether $\epsilon =1$ or $-1$ respectively. If $k$ is the least positive integer such that $f_{k+1}(x)$ is reducible over $\Q$, then we say the chain has length $k$.
\end{defn}

\begin{rem}
Note that if the restriction in Definition \ref{Def:Poly} that $f_1(x)$ have positive leading coefficient is dropped, then any chain of the first or second kind, with first term $f_1(x)$, produces a chain of the other kind with the exact same irreducibility properties by simply allowing the first term to be $-f_1(x)$.
\end{rem}

Although, using Fermat's little theorem, it is straightforward to show that Cunningham chains are necessarily finite, it is conjectured that for any positive integer $k$, there exist infinitely many Cunningham chains of length $k$. This conjecture follows from either Dickson's conjecture \cite{Dickson, Ribenboim} or Schinzel's hypothesis H \cite{Ribenboim, SchinzelSierpinski}, but it is unlikely to be proven unconditionally in the near future. Currently, the longest known Cunningham chain is of length 17. 
 In this article, we establish the analogous conjecture for polynomial Cunningham chains, and we also show that polynomial Cunningham chains can be infinite in length. More precisely, we prove the following.

\begin{thm}\label{Thm:1} For $\epsilon \in \left\{-1,1\right\}$, and a given polynomial $f_1(x)$, we define a sequence $\{f_i(x)\}_{i=1}^{\infty}$ of polynomials by $f_i(x)=xf_{i-1}(x)+\epsilon$ for $i\ge 2$.
\begin{enumerate}
\item Let $\epsilon=1$. Let $m\ge 2$ and $k\ge 1$ be integers. Define \[f_1(x):=m^2x^{k+3}+mx^{k+2}+mx^{k+1}+\cdots +mx+1.\] Then $f_i(x)$ is reducible over $\Q$ if and only if $i=k+1$.
\item Let $\epsilon=-1$. Let $k$ and $m$ be positive integers with $m^2>k+1$. Define \[f_1(x):=m^2x-(m^2-k).\] Then $f_i(x)$ is reducible over $\Q$ if and only if $i=k+1$.
    \end{enumerate}
\end{thm}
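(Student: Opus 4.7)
My plan is to treat both cases in parallel: (i) derive a closed form for $f_i(x)$ by induction on the recursion, (ii) exhibit the explicit factorization witnessing reducibility at $i=k+1$, and (iii) prove irreducibility of $f_i(x)$ for $i\ne k+1$.

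For case (2), an easy induction gives
\[
f_i(x)=m^2x^i-(m^2-k)x^{i-1}-(x^{i-2}+x^{i-3}+\cdots+x+1),
\]
so $f_i(1)=k+1-i$. In particular $(x-1)\mid f_{k+1}(x)$, and long division yields
\[
f_{k+1}(x)=(x-1)\bigl(m^2x^k+kx^{k-1}+(k-1)x^{k-2}+\cdots+2x+1\bigr).
\]
For case (1), the analogous induction gives
\[
f_i(x)=m^2x^{k+2+i}+m(x^{k+1+i}+\cdots+x^i)+(x^{i-1}+\cdots+1),
\]
and multiplying by $(x-1)$ exposes the sparse polynomial $(x-1)f_{k+1}(x)=m^2x^{2k+4}-m(m-1)x^{2k+3}-(m-1)x^{k+1}-1$, which factors as $(mx^{k+2}+1)(mx^{k+2}-(m-1)x^{k+1}-1)$; dividing the second factor by $(x-1)$ gives
\[
f_{k+1}(x)=(mx^{k+2}+1)\bigl(mx^{k+1}+x^k+\cdots+x+1\bigr).
\]

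For step (iii), I would first use the rational root theorem to exclude linear factors: $f_i$ has leading coefficient $m^2$ and (for $i\ge 2$) constant term $\pm 1$, so any rational root is $\pm 1/d$ with $d\mid m^2$. The candidates $\pm 1$ are eliminated by $f_i(1)=k+1-i\ne 0$ and direct evaluation of $f_i(-1)$, using the standing hypotheses on $m$ and $k$. To rule out higher-degree factors I would employ a $p$-adic Newton polygon analysis at a prime $p\mid m$: in case (2) the polygon consists of just two segments—of lengths $1$ and $i-1$—which forces any nontrivial $\Z$-factorization to produce a rational root of the type already excluded; in case (1) the polygon splits into three segments of lengths $i-1$, $k+2$, and $1$, whose factor-degree possibilities are compatible with an actual $(mx^{k+2}+1)\cdot(\textrm{degree }k+1)$ factorization precisely when $i=k+1$.

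The principal obstacle is step (iii). Beyond the Newton-polygon reduction, one still has to verify case by case that no $d\mid m^2$ produces an extraneous rational root of $f_i$ when $i\ne k+1$, and that the sub-polygon factor shapes permitted in case (1) cannot actually be realized by a $\Z$-factorization for any $i\ne k+1$. This bookkeeping, powered by the hypotheses $m\ge 2$ and $m^2>k+1$, is where the real work lies.
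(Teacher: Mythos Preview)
Your steps (i) and (ii) are fine and match the paper's (the paper also displays $f_{k+1}(x)=(mx^{k+1}+x^k+\cdots+1)(mx^{k+2}+1)$ in case~(1) and observes $f_{k+1}(1)=0$ in case~(2)). The problem is step~(iii): the Newton-polygon inference you state is not valid.

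In case~(2), at a prime $p\mid m$ the lower Newton polygon of $f_i$ indeed has a slope-$0$ segment of length $i-1$ and a steep segment of length $1$ (assuming also $p\nmid k$, which is not guaranteed for every $m,k$). But a slope-$0$ segment imposes \emph{no} indivisibility: it only says that $i-1$ of the roots are $p$-adic units, and those roots may be distributed among several $\Q_p$-factors in any way. Concretely, a polynomial such as $(m^2x-1)(x^2+x+1)(x^2-x+1)$ has exactly your two-segment polygon at any $p\mid m$, yet it is reducible with no rational root. So ``two segments of lengths $1$ and $i-1$'' does \emph{not} force a rational root, and the reduction to the rational-root test collapses. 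The same obstruction hits case~(1): the slope-$0$ segment of length $i-1$ is unconstrained, and the middle segment of slope $v_p(m)/(k+2)$ is ``pure'' only when $\gcd(v_p(m),k+2)=1$, which you cannot assume. There is no evident bookkeeping that repairs this; the Newton polygon at $p\mid m$ simply does not see enough to prove irreducibility here.

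The paper's route is completely different. It passes to the reciprocal polynomial $\widetilde{f_n}$ and multiplies by $x-1$ to obtain a \emph{quadrinomial}
\[
F_n(x)=(x-1)\widetilde{f_n}(x)=x^{n+k+3}+(m-1)x^{k+3}+(m^2-m)x-m^2
\]
in case~(1) (and an analogous quadrinomial in case~(2)). It then shows that $\widetilde{f_n}$ is exactly the nonreciprocal part of $F_n$ (via a root-location argument in case~(1), and a Descartes' rule-of-signs argument in case~(2)), and finally invokes the Fried--Schinzel reducibility criterion for quadrinomials to check, case by case, that the nonreciprocal part of $F_n$ is irreducible for all $n\ne k+1$. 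The key external input is the Fried--Schinzel theorem; nothing like a Newton-polygon or rational-root reduction is used.
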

Corollary \ref{Cor:Cun-Lengthk} is immediate from Theorem \ref{Thm:1}, and Corollary \ref{Cor:Cun-Infinite Length} follows from Theorem \ref{Thm:1} by considering sequences $\{g_i(x)\}_{i=1}^{\infty}$, with $g_1(x):=f_{k+2}(x)$, and $g_i(x)=xg_{i-1}(x)+\epsilon$ for $i\ge 2$.
\begin{cor}\label{Cor:Cun-Lengthk}
  For every positive integer $k$, there exist infinitely many polynomial Cunningham chains (of both kinds) of length $k$.
\end{cor}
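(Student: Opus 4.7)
The plan is to read the corollary off directly from Theorem~\ref{Thm:1}. For the first kind, I fix a positive integer $k$ and let $m$ range over integers with $m\ge 2$. For each such $m$, Theorem~\ref{Thm:1}(1) with $\epsilon = 1$ produces a sequence $\{f_i(x)\}_{i\ge 1}$ starting from $f_1(x) = m^2 x^{k+3} + mx^{k+2} + mx^{k+1} + \cdots + mx + 1$ in which $f_i(x)$ is irreducible over $\mathbb{Q}$ if and only if $i \neq k+1$. In particular, each of $f_1(x), f_2(x), \ldots, f_k(x)$ is irreducible over $\mathbb{Q}$ while $f_{k+1}(x)$ is reducible, and $f_1(x)$ has positive leading coefficient $m^2 > 0$. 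By Definition~\ref{Def:Poly}, this sequence therefore qualifies as a polynomial Cunningham chain of the first kind of length exactly $k$.

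For the second kind I proceed identically, invoking Theorem~\ref{Thm:1}(2) with $f_1(x) = m^2 x - (m^2 - k)$, where $m$ now ranges over the positive integers satisfying $m^2 > k+1$. The constraint $m^2 > k+1$ leaves infinitely many admissible $m$, and in each case the hypotheses of Definition~\ref{Def:Poly} (positive leading coefficient, irreducibility of $f_1,\ldots,f_k$, reducibility of $f_{k+1}$) are verified verbatim by the theorem.

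To promote "there exists a chain" to "there exist infinitely many", I note that distinct values of the parameter $m$ produce distinct initial polynomials $f_1(x)$, and therefore distinct chains: in case~(1) the leading coefficient $m^2$ recovers $m$, while in case~(2) the linear polynomial $m^2 x - (m^2 - k)$ is plainly determined by $m$ once $k$ is fixed. Since in each case $m$ ranges over an infinite set of positive integers, we obtain infinitely many polynomial Cunningham chains of length $k$ of each kind.

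There is essentially no obstacle to overcome; the substantive analytic and algebraic work has already been carried out in Theorem~\ref{Thm:1}, and this corollary is merely the observation that the parameter $m$ appearing in the theorem is free to range over an infinite family.
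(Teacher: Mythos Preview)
Your argument is correct and matches the paper's approach: the paper simply states that Corollary~\ref{Cor:Cun-Lengthk} is immediate from Theorem~\ref{Thm:1}, and your proof spells out precisely this immediacy by letting the parameter $m$ range over its infinite admissible set and observing that distinct $m$ yield distinct initial polynomials $f_1(x)$.
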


\begin{cor}\label{Cor:Cun-Infinite Length}
 There exist infinitely many polynomial Cunningham chains (of both kinds) of infinite length.
 \end{cor}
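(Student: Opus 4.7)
The plan is to use the hint supplied in the paragraph between Theorem \ref{Thm:1} and Corollary \ref{Cor:Cun-Lengthk}: start from the explicit families provided by Theorem \ref{Thm:1}, step past the single reducible term in each such chain, and reindex to obtain a new chain that has no reducible terms at all. Concretely, fix $\epsilon\in\{-1,1\}$ and any positive integer $k$, and choose $f_1(x)$ as in the corresponding part of Theorem \ref{Thm:1}. Then define
\[
g_1(x):=f_{k+2}(x),\qquad g_i(x):=xg_{i-1}(x)+\epsilon\quad(i\ge 2).
\]
A trivial induction on $i$ shows that $g_i(x)=f_{k+1+i}(x)$ for every $i\ge 1$.

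With this identification, the first step is to verify that $\{g_i\}_{i=1}^{\infty}$ actually qualifies as a polynomial Cunningham chain in the sense of Definition \ref{Def:Poly}. Each $g_i(x)$ lies in $\Z[x]$ because the $f_j$ do, and the recurrence is built into the definition of the $g_i$. The leading coefficient of $g_1=f_{k+2}$ equals the leading coefficient of $f_1$ (the recurrence $f_j=xf_{j-1}+\epsilon$ preserves the leading coefficient), which is positive by construction. Finally, for every $i\ge 1$ the index $k+1+i$ is strictly greater than $k+1$, so by Theorem \ref{Thm:1} the polynomial $g_i(x)=f_{k+1+i}(x)$ is irreducible over $\Q$. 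Hence no term of $\{g_i\}$ is reducible, meaning that there is no least positive integer $k'$ with $g_{k'+1}$ reducible: the chain has infinite length.

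The remaining step is to produce infinitely many such chains. For $\epsilon=1$, Theorem \ref{Thm:1}(1) gives one choice of $f_1$ for each pair $(m,k)$ with $m\ge 2$ and $k\ge 1$; fixing $k=1$ (say) and letting $m$ range gives infinitely many distinct starting polynomials $g_1(x)=f_{3}(x)$, since the leading coefficient (and in fact every coefficient) depends on $m$. For $\epsilon=-1$, Theorem \ref{Thm:1}(2) gives distinct $f_1(x)=m^2x-(m^2-k)$ for different values of $(m,k)$ with $m^2>k+1$; letting $k$ vary produces $g_1=f_{k+2}$ of strictly increasing degree $k+2$, so the resulting infinite chains are pairwise distinct. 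Since distinct first terms yield distinct chains, in each case we obtain infinitely many polynomial Cunningham chains of infinite length.

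There is essentially no obstacle here beyond bookkeeping: the genuine content is Theorem \ref{Thm:1}, and this corollary is simply a matter of shifting the index past the unique reducible term and checking that the infinitely many parameter choices allowed by Theorem \ref{Thm:1} give infinitely many non-isomorphic shifted chains. The only mild care needed is to confirm that the leading-coefficient and membership-in-$\Z[x]$ conditions of Definition \ref{Def:Poly} survive the shift, which they do automatically from the form of the recurrence.
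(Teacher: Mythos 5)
Your proposal is correct and follows exactly the route the paper indicates: it shifts the index past the unique reducible term $f_{k+1}$ by setting $g_1(x):=f_{k+2}(x)$, so that every $g_i(x)=f_{k+1+i}(x)$ is irreducible by Theorem \ref{Thm:1}, and then varies the parameters to get infinitely many such chains. The paper states this in one sentence; you have merely filled in the routine verifications.
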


\section{Preliminaries}\label{Sec:Prelim}
We begin this section with some more definitions and notation, and we let $f(x)\in \Z[x]$ throughout this section.

\begin{defn}\label{Def:Recip}
The {\it reciprocal} of $f(x)$ is defined to be the polynomial
$\ds \widetilde{f}(x):=x^{\deg f}f\left(\frac{1}{x}\right)$. 
We say that $f(x)$ is {\it reciprocal} if $f(x)=\pm \widetilde{f}(x)$, and {\it nonreciprocal} otherwise.
\end{defn}
\begin{defn}
Suppose $f(0)\ne 0$, and that $f(x)$ factors over $\Q$ into irreducibles as $g_1(x)g_2(x)\cdots g_k(x)$, where $g_i(x)$ is reciprocal with positive leading coefficient, exactly when $0\le i \le j$. Then $g_1(x)g_2(x)\cdots g_j(x)$ is called the {\it reciprocal part of $f$} and $g_{j+1}(x)\cdots g_{k}(x)$ is called the {\it nonreciprocal part of $f$}. 
\end{defn}

If $f(0)\ne 0$, then it is clear from Definition \ref{Def:Recip} that $\deg f=\deg \widetilde{f}$ and $\widetilde{\widetilde{f}}(x)=f(x)$. Then, in this situation, $f(x)=g(x)h(x)$ if and only if $\widetilde{f}(x)=\widetilde{g}(x)\widetilde{h}(x)$. Therefore, we have the following:
\begin{prop}\label{Prop:RecipIrreduc}
 Suppose that $f(0)\ne 0$. Then $f(x)$ is irreducible over $\Q$ if and only if $\widetilde{f}(x)$ is irreducible over $\Q$.
\end{prop}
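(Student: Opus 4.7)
The plan is to reduce the proposition to the multiplicativity of the reciprocal operation together with the involution property $\widetilde{\widetilde{f}}(x)=f(x)$, both of which the paper has already noted hold whenever the constant term is nonzero.

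First, I would verify that whenever $f(x)=g(x)h(x)$ in $\Q[x]$ with $f(0)\ne 0$, we have $\widetilde{f}(x)=\widetilde{g}(x)\widetilde{h}(x)$. This is a direct calculation: since $\deg f=\deg g+\deg h$,
\[
\widetilde{f}(x)=x^{\deg f}f(1/x)=\bigl(x^{\deg g}g(1/x)\bigr)\bigl(x^{\deg h}h(1/x)\bigr)=\widetilde{g}(x)\widetilde{h}(x).
\]
The hypothesis $f(0)\ne 0$ has not been used in this identity, but it becomes essential in the next step for degree bookkeeping.

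Second, for the forward direction of the contrapositive, suppose $f(x)$ is reducible over $\Q$, so that $f=gh$ in $\Q[x]$ with $\deg g,\deg h\ge 1$. Because $f(0)=g(0)h(0)\ne 0$, we have both $g(0)\ne 0$ and $h(0)\ne 0$, which forces $\deg\widetilde{g}=\deg g\ge 1$ and $\deg\widetilde{h}=\deg h\ge 1$. Hence $\widetilde{f}=\widetilde{g}\widetilde{h}$ is a nontrivial factorization in $\Q[x]$, so $\widetilde{f}$ is reducible. For the converse, I would apply the same argument to $\widetilde{f}$ in place of $f$, noting that $\widetilde{f}(0)$ equals the leading coefficient of $f$ and is therefore nonzero, and then invoke $\widetilde{\widetilde{f}}=f$.

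There is no substantive obstacle here; the whole proof is essentially the observation that $f\mapsto\widetilde{f}$ is a degree-preserving multiplicative involution on the set of $\Q[x]$-polynomials with nonzero constant term, so it must preserve irreducibility as well. The only subtlety worth mentioning is the degree accounting in the previous paragraph, which is precisely where the assumption $f(0)\ne 0$ is needed.
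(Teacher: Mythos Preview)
Your proof is correct and follows exactly the approach the paper takes: the discussion preceding the proposition observes that $\deg f=\deg\widetilde{f}$, $\widetilde{\widetilde{f}}=f$, and $f=gh\iff\widetilde{f}=\widetilde{g}\widetilde{h}$ whenever $f(0)\ne 0$, and the proposition is then stated as an immediate consequence. You have simply spelled out the degree bookkeeping more carefully than the paper does.
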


The following theorem due to Fried and Schinzel \cite{FriedSchinzel}, which we state without proof, is needed to establish our results.
 We let $\exp_2(\alpha)$ denote $\exp(\exp(\alpha))$ for any expression $\alpha$.

\begin{thm}\label{Thm:SchinzelQuad1}
Let $a,b,c,d$ be any nonzero integers, $m>n>p$ any positive integers, and assume that $q(x)=ax^m+bx^n+cx^p+d$ is not the product of two binomials. Then the nonreciprocal part of $q(x)$ is reducible if and only if one of the following cases holds:
\begin{enumerate}
\item \label{Thm:SQ1-1} $q(x)$ can be divided into two parts which have a nonreciprocal common factor
\item \label{Thm:SQ1-2} $q(x)$ can be represented in one of the three forms in {\rm ($*$)} below:\\

\hspace*{-.7in}$(*)\left\{\begin{array}{l}
\xi(U^3+V^3+W^3-3UVW)\\=\xi(U+V+W)(U^2+V^2+W^2-UV-UW-VW),\\\\

\xi(U^2-4TUVW-T^2V^4-4T^2W^4)\\=\xi(U-TV^2-2TVW-2TW^2)(U+TV^2-2TVW+2TW^2),\\\\

\xi(U^2+2UV+V^2-W^2)=\xi(U+V+W)(U+V-W),
\end{array}\right.$\\

\noindent where $T,U,V,W\in \Q[x]$ are monomials, $\xi\in \Q$, and the factors appearing on the right hand side of each equation in {\rm ($*$)} are not reciprocal
\item \label{Thm:SQ1-3} $m=vm_1$, $n=vn_1$, $p=vp_1$, where $v>1$,
\[m_1<\exp_2(3\cdot 2^{a^2+b^2+c^2+d^2+2}\log (a^2+b^2+c^2+d^2)),\]
and the non-reciprocal part of $ax^{m_1}+bx^{n_1}+cx^{p_1}+d$ is reducible.
\end{enumerate}

\end{thm}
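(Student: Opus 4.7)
The plan is to follow the Fried--Schinzel strategy of classifying how the nonreciprocal part of a lacunary $4$-term polynomial $q(x)=ax^m+bx^n+cx^p+d$ can factor. Assume the nonreciprocal part is reducible and write $q(x)=r(x)\phi(x)\psi(x)$ with $r$ reciprocal and $\phi,\psi\in\mathbb{Z}[x]$ nonreciprocal of positive degree. The aim is to extract, from the sparseness of $q$ together with the nonreciprocity of $\phi$ and $\psi$, rigid algebraic conditions that must fall into one of the three cases in the statement.

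\textbf{Step 1: multiplicative relations from sparsity.} Each root $\alpha$ of $\phi$ satisfies the single equation $a\alpha^m+b\alpha^n+c\alpha^p+d=0$, and because $\phi$ is nonreciprocal the roots of $\phi$ are genuinely disjoint from the reciprocals of the roots of $r\psi$. I would translate the factorization $q=r\phi\psi$ into a system of identities among products of roots of $\phi$ and $\psi$, and then invoke Mann-type theorems on vanishing subsums of roots of unity (or, equivalently, on $S$-unit equations) to obtain rigid $\mathbb{Z}$-linear constraints on the exponents $m,n,p$ and on the coefficients $a,b,c,d$.

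\textbf{Step 2: trichotomy and identification of the three identities.} The constraints from Step 1 separate into three types. (i) The monomial dependencies correspond to $q$ splitting as a sum of two binomials sharing a common nonreciprocal factor, which is case (1). (ii) The dependencies match a universal algebraic identity in monomial variables $T,U,V,W$; a direct classification, governed by how four monomial supports can combine to allow a nontrivial nonreciprocal factorization, shows that only the three identities displayed in $(*)$ arise, namely the sum-of-three-cubes identity, a specific quartic identity, and the difference-of-squares identity. This is case (2). (iii) Otherwise the exponents share a common factor $v>1$, so $q(x)=Q(x^v)$ for a lower-degree quadrinomial $Q$, placing us in case (3) and allowing induction on $v$.

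\textbf{Main obstacle.} The hardest step is producing the effective bound $m_1<\exp_2(3\cdot 2^{a^2+b^2+c^2+d^2+2}\log(a^2+b^2+c^2+d^2))$ in case (3). This requires Baker-style lower bounds for linear forms in (archimedean and $p$-adic) logarithms applied to the moduli of roots of nonreciprocal factors of $Q$, combined with careful tracking of how heights and coefficients propagate through the induction on $v$ and through the algebraic substitutions considered in case (2). A secondary difficulty is the combinatorial exhaustiveness in case (2): one must verify that no further universal $4$-term algebraic identity over $\mathbb{Q}[T,U,V,W]$ produces a nontrivial nonreciprocal factorization, which ultimately reduces to a careful analysis of Newton polygons of quadrinomials and of valuations of their roots at the relevant primes. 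For these reasons the theorem is quoted from the Fried--Schinzel paper rather than reproved here.
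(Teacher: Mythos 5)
The paper does not prove this theorem at all: it is imported verbatim from Fried and Schinzel \cite{FriedSchinzel} and explicitly ``stated without proof,'' so there is no in-paper argument to measure your attempt against. Your proposal ends in the same place --- deferring to the citation --- and to that extent it matches the paper's treatment. But you should be clear that what you have written is not a proof and should not be presented as one: Steps 1 and 2 are descriptions of conclusions, not derivations. In Step 1 you assert that the factorization $q = r\phi\psi$ yields ``rigid $\Z$-linear constraints'' via Mann-type theorems or $S$-unit equations, but you never exhibit the system of identities among roots, never explain why nonreciprocity of $\phi$ and $\psi$ forces the relevant subsums to vanish, and never show how such vanishing translates into constraints on $m,n,p$ and $a,b,c,d$. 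In Step 2 the trichotomy is simply announced; the exhaustiveness of the three identities in $(*)$ --- which you yourself flag as a ``secondary difficulty'' --- is precisely the combinatorial heart of the Fried--Schinzel argument, and without it the ``only if'' direction of the theorem is unestablished. The effective double-exponential bound in case \emph{(3)} is likewise only named as an obstacle.

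A smaller but substantive point: your attribution of the bound to Baker-style lower bounds for linear forms in logarithms is not how this result is obtained. The Fried--Schinzel paper (and the Schinzel theory of reducibility of lacunary polynomials on which it builds) derives its effective constants from elementary height and lattice-point estimates attached to the substitution $x \mapsto x^v$ and to the classification of the forms in $(*)$, not from transcendence theory. Since the present paper's proofs of Theorem \ref{Thm:1} only \emph{apply} this theorem --- checking that $F_n(x)$ is not a product of two binomials, does not match any form in $(*)$, admits no two-part splitting with a common nonreciprocal factor, and has a linear term ruling out case \emph{(3)} --- the correct course here is exactly what the paper does: cite the result and verify its hypotheses, rather than sketch a proof whose load-bearing steps are left open.
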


\section{Proof of Theorem \ref{Thm:1}}
\begin{proof}[Proof of Theorem \ref{Thm:1}]
For part {\it (1)}, we have by induction that 
 \[f_n(x)=m^2x^{n+k+2}+mx^{n+k+1}+mx^{n+k}+\cdots +mx^{n}+x^{n-1}+x^{n-2}+\cdots +x+1,\]
for all $n\ge 1$. Since
\[f_{k+1}(x)=\left(mx^{k+1}+x^k+\cdots +x+1\right)\left(mx^{k+2}+1\right),\]
we see that $f_n(x)$ is reducible when $n=k+1$. To show that $f_n(x)$ is irreducible over $\Q$ for all $n\ne k+1$, we show that \[\widetilde{f_n}(x)=x^{n+k+2}+x^{n+k+1}+\cdots + x^{k+3}+mx^{k+2}+\cdots +mx+m^2\] is irreducible over $\Q$ for all $n\ne k+1$, which is equivalent by Proposition \ref{Prop:RecipIrreduc}.
We first claim that all zeros of $\widetilde{f_n}(x)$ are in $|z|>1$.
Consider the polynomial
\[F_n(x):=(x-1)\widetilde{f_n}(x)=x^{n+k+3}+(m-1)x^{k+3}+(m^2-m)x-m^2,\]
and let $\alpha$ be a zero of $F_n(x)$. If $|\alpha|<1$, then
\begin{align*}
  m^2&=\left|\alpha^{n+k+3}+(m-1)\alpha^{k+3}+(m^2-m)\alpha\right|\\
  &\le |\alpha|^{n+k+3}+(m-1)|\alpha|^{k+3}+(m^2-m)|\alpha|\\
  &<m^2,
\end{align*}
which is impossible.
 Hence, $|\alpha|\ge 1$. If $|\alpha|=1$, then $\alpha=e^{i\theta}=\cos(\theta)+i\sin(\theta)$, for some $\theta\in [0,2\pi)$.
 Thus,\[\cos\left((n+k+3)\theta\right)+(m-1)\cos\left((k+3)\theta\right)+(m^2-m)\cos(\theta)=m^2,\]
 which implies that $\theta=0$, and so $\alpha=1$. This establishes the claim that all zeros of $\widetilde{f_n}(x)$ are in $|z|>1$. It follows that the nonreciprocal part of $F_n(x)$ is $\widetilde{f_n}(x)$.

  We now use Theorem \ref{Thm:SchinzelQuad1} with $q(x):=F_n(x)$ to show that $\widetilde{f_n}(x)$ is irreducible when $n\ne k+1$. We see easily that
\begin{align*}
F_n(x)&=x^{n+k+3}+(m-1)x^{k+3}+(m^2-m)x-m^2\\
&=(x^{\ell_1}+r_1)(x^{\ell_2}+r_2)\\
&= x^{\ell_1+\ell_2}+r_2x^{\ell_1}+r_1x^{\ell_2}+r_1r_2
\end{align*}
 is impossible by comparing coefficients. Thus, $F_n(x)$ is not the product of two binomials.

Next, we show that $F_n(x)$ cannot be any of the forms in $(*)$ in Theorem \ref{Thm:SchinzelQuad1}.
First, assume that \[F_n(x)=\xi(U^3+V^3+W^3-3UVW).\] Then $-3\xi UVW=(m^2-m)x$, and hence two of the three terms $U^3,V^3,W^3$ are constant, which is a contradiction.

Suppose next that \[F_n(x)=\xi(U^2-4TUVW-T^2V^4-4T^2W^4).\] If $\xi>0$, then $-\xi T^2V^4$ and $-4\xi T^2W^4$ are negative terms, which is impossible. Thus, $\xi<0$, and so $\xi U^2=-m^2$. The parity of the exponents implies that $-4\xi TUVW =(m^2-m)x$. Therefore, exactly two of $T$, $V$ and $W$ are constants. Since we have only the two possibilities
\begin{equation}\label{Eq:Poss1} -\xi T^2V^4=x^{n+k+3}\quad \mbox{and} \quad -4\xi T^2W^4=(m-1)x^{k+3},\quad \mbox{or}\end{equation}
\begin{equation}\label{Eq:Poss2} -\xi T^2V^4=(m-1)x^{k+3}\quad \mbox{and} \quad -4\xi T^2W^4=x^{n+k+3},\end{equation}
it follows that $V$ and $W$ must be constants. Then, since $-4\xi TUVW =(m^2-m)x$, we have that $T$ is divisible by $x$, but not $x^2$. However, by comparing exponents, we see then that both possibilities, (\ref{Eq:Poss1}) and (\ref{Eq:Poss2}), are impossible since $n+k+3\ge5$.

Now suppose that \[F_n(x)=\xi(U^2+2UV+V^2-W^2).\]
If $\xi<0$, then the terms $\xi U^2$ and $\xi V^2$ are both negative, which is impossible. Thus, $\xi>0$, and so $-\xi W^2=-m^2$. Then both $U$ and $V$ must be divisible by $x$, which implies that $U^2$, $UV$ and $V^2$ are divisible by $x^2$. But this contradicts the fact that $F_n(x)$ contains the term $(m^2-m)x$.

Also, the fact that $F_n(x)$ contains the linear term $(m^2-m)x$ implies that case {\it (\ref{Thm:SQ1-3})} does not apply in Theorem \ref{Thm:SchinzelQuad1}.

Next we consider if, and when, $F_n(x)$ can be divided into two parts which have a common nonreciprocal factor. There are three cases to check: $(g_1, g_2)=$
\begin{enumerate}[(i)]
  \item \label{Case:1} $\left(x^{n+k+3}+(m-1)x^{k+3},\quad (m^2-m)x-m^2\right)$
  \item \label{Case:2} $\left(x^{n+k+3}+(m^2-m)x,\quad (m-1)x^{k+3}-m^2\right)$
  \item \label{Case:3} $\left(x^{n+k+3}-m^2,\quad (m-1)x^{k+3}+(m^2-m)x\right)$.
\end{enumerate}
In case (\ref{Case:1}), it is easy to see that $g_1$ and $g_2$ have no common nonreciprocal factor since $m/(m-1)$ is not a zero of $g_1$.

In case (\ref{Case:2}), $g_1/x$ is irreducible over $\Q$ by Eisenstein's criterion using any prime divisor of $m$. Thus, by considering degrees, we can rule out every possibility except $n=1$. But, when $n=1$, we see that $g_2/(m-1)\ne \pm g_1/x$, and hence $g_1$ and $g_2$ have no common nonreciprocal factor in this case.

For case (\ref{Case:3}), let $h$ be a common factor of $g_1$ and $g_2$. Then $h$ divides
\[x^n\frac{g_2}{m-1}-g_1=m\left(x^{n+1}+m\right),\] so that $h$ divides
\[x^{n+1}+m-\frac{g_2}{(m-1)x}=\left\{\begin{array}{cc}
-x^{n+1}\left(x^{k-n+1}-1\right) & \mbox{ if $n<k+1$}\\
x^{k+2}\left(x^{n-k-1}-1\right) & \mbox{ if $n>k+1$}
\end{array} \right.\]
In either case, we see that $h$ is reciprocal, which implies that $\widetilde{f_n}(x)$ is irreducible for all $n\ne k+1$.
This completes the proof in the case of chains of the first kind.

To establish part {\it (2)} of the theorem, we have by induction that
\[f_n(x)=m^2x^{n}-(m^2-k)x^{n-1}-x^{n-1}-\cdots +x-1,\]
for all $n\ge 1$.
Since $f_{k+1}(1)=0$, we see that $f_n(x)$ is reducible when $n=k+1$. To show that $f_n(x)$ is irreducible over $\Q$ for all $n\ne k+1$, it is enough, by Proposition \ref{Prop:RecipIrreduc}, to show that \[-\widetilde{f_n}(x)=x^{n}+x^{n-1}+\cdots + x^{2}+(m^2-k)x-m^2\] is irreducible over $\Q$ for all $n\ne k+1$.
Consider the polynomial
\[F_n(x):=-(x-1)\widetilde{f_n}(x)=x^{n+1}+(m^2-k-1)x^{2}-(2m^2-k)x+m^2.\]
 We claim that $-\widetilde{f_n}(x)$ is the nonreciprocal part of $F_n(x)$ when $n\ne k+1$. Suppose, by way of contradiction, that $\alpha$ and $1/\alpha$ are both zeros of $F_n(x)$, with $\alpha\ne 1$. Then
\begin{equation}\label{Eq:alpha}
-\alpha^{n+1}=(m^2-k-1)\alpha^2-(2m^2-k)\alpha+m^2 \quad \mbox{ and}
\end{equation}
\begin{equation}\label{Eq:1/alpha}
  -\frac{1}{\alpha^{n+1}}=\frac{m^2-k-1}{\alpha^2}-\frac{2m^2-k}{\alpha}+m^2.
\end{equation}
Substituting the expression for $-\alpha^{n+1}$ from (\ref{Eq:alpha}) into (\ref{Eq:1/alpha}), rearranging and factoring, gives $(\alpha-1)^2g(\alpha)=0$, where
\[g(x)=\left(m^4-m^2k-m^2\right)x^2-\left(2m^4-2m^2k+k^2+k\right)x+m^4-m^2k-m^2.\] Note that $\alpha$ and $1/\alpha$ are distinct positive real zeros of $g(x)$ since $m^2>k+1$. However, by Descartes' rule of signs, $F_n(x)$ has two positive real zeros counting multiplicities, and since $-\widetilde{f_n}(1)=0$ only when $n=k+1$, it follows that $F_n(x)$ has exactly one positive real zero $\beta\ne 1$ when $n\ne k+1$. This contradiction establishes the claim.

We now use Theorem \ref{Thm:SchinzelQuad1} with $q(x):=F_n(x)$ to show that $\widetilde{f_n}(x)$ is irreducible when $n\ne k+1$. Since, as in the case of chains of the first kind, it is straightforward to show that $F_n(x)$ is not the product of two binomials, and that $F_n(x)$ cannot be any of the forms in $(*)$ in Theorem \ref{Thm:SchinzelQuad1}, we omit the details. Also, the fact that $F_n(x)$ contains a linear term implies that case {\it (\ref{Thm:SQ1-3})} does not apply in Theorem \ref{Thm:SchinzelQuad1}.

Next we consider if, and when, $F_n(x)$ can be divided into two parts which have a common nonreciprocal factor. There are three cases to check: $(g_1, g_2)=$
\begin{enumerate}[(i)]
  \item \label{Case:1A} $\left(x^{n+1}+(m^2-k-1)x^2,\quad -(2m^2-k)x+m^2\right)$
  \item \label{Case:2A} $\left(x^{n+1}-(2m^2-k)x,\quad (m^2-k-1)x^2+m^2\right)$
  \item \label{Case:3A} $\left(x^{n+1}+m^2,\quad (m^2-k-1)x^{2}-(2m^2-k)x\right)$.
\end{enumerate}
It is easy to see in each of these cases, by examining the zeros of $g_2$, that $g_1$ and $g_2$ have no common nonreciprocal factor.
Thus, $f_n(x)$ is irreducible if and only if $n\ne k+1$, which completes the proof of the theorem.

\end{proof}

\section{Concluding Remarks}

 For each $\epsilon\in \left\{-1,1\right\}$, Theorem 3 in \cite{SchinzelTri2001}, rather than Theorem \ref{Thm:SchinzelQuad1}, can be used to establish the existence of infinitely many polynomials $f_1(x)$, such that there is exactly one reducible polynomial in the sequence $\{f_i(x)\}_{i=1}^{\infty}$, where $f_i(x)=xf_{i-1}(x)+\epsilon$ for $i\ge 2$. However, the drawback is that the polynomials $f_1(x)$ have no set form using this approach, as do the polynomials $f_1(x)$ given in Theorem \ref{Thm:1}, and so they cannot be given explicitly. Along these lines we make the following conjecture.
\begin{conjecture}
  Let $k\ge 1$ be an integer. Let $t=2\lceil\frac{k+1}{2}\rceil+1$, and let $m=2\left(\frac{2^t+1}{3}\right)$.
   Then
  \[x^j+x^{j-1}+\cdots +x+m,\] where $j\ge t-k$,
  is reducible over $\Q$ if and only if $j=t$.
\end{conjecture}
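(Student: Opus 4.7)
The plan is to mirror the proof of Theorem \ref{Thm:1}, with Schinzel's trinomial theorem (Theorem 3 of \cite{SchinzelTri2001}) playing the role of Theorem \ref{Thm:SchinzelQuad1}, since the auxiliary polynomial will be a trinomial rather than a quadrinomial. Set $P_j(x) := x^j + x^{j-1} + \cdots + x + m$. For the ``if'' direction, since $t$ is odd by construction, the geometric series gives
\[
P_t(-2) \;=\; \sum_{i=1}^{t}(-2)^i + m \;=\; -\frac{2(2^t+1)}{3} + m \;=\; 0,
\]
so $(x+2)\mid P_t(x)$ and $P_t$ is reducible (of degree at least $3$).

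For the ``only if'' direction, fix $j\neq t$ with $j\ge t-k$ and form the trinomial
\[
F_j(x) \;:=\; (x-1)P_j(x) \;=\; x^{j+1}+(m-1)x-m.
\]
A triangle-inequality argument parallel to the one in Theorem \ref{Thm:1} will show that the only zero of $F_j$ in $|z|\le 1$ is $z=1$: for $|\alpha|<1$ one has $|F_j(\alpha)|\ge m-(m-1)|\alpha|-|\alpha|^{j+1}>m(1-|\alpha|)>0$, and on the unit circle the equation $|m-(m-1)\alpha|=1$ forces $\alpha=1$. Since $P_j(1)=j+m>0$, every zero of $P_j$ lies in $|z|>1$, the reciprocal part of $P_j$ is constant, and hence $P_j$ equals (up to sign) the nonreciprocal part of $F_j$.

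I would then invoke Schinzel's trinomial theorem, which classifies when the nonreciprocal part of a trinomial is reducible via a finite list of explicit conditions analogous to those of Theorem \ref{Thm:SchinzelQuad1}. The common-nonreciprocal-factor case permits only three partitions of $F_j$'s monomials, namely $(x^{j+1},\ (m-1)x-m)$, $((m-1)x,\ x^{j+1}-m)$, and $(-m,\ x^{j+1}+(m-1)x)$; each pair has no nontrivial common factor by inspection of zeros at $0$ and at $m/(m-1)$, together with the observation that $-m$ is a nonzero constant. The exponent-reduction case $F_j(x)=G(x^d)$, $d>1$, is impossible since $\gcd(j+1,1)=1$. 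The remaining algebraic-identity cases (analogous to the identities $(*)$ in Theorem \ref{Thm:SchinzelQuad1}) must be ruled out for $j\neq t$ by a finite case analysis on $x^{j+1}+(m-1)x-m$ with $m=2(2^t+1)/3$.

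The main obstacle will be this last step. The value $m=2(2^t+1)/3$ is calibrated precisely so that $F_t(-2)=0$, and the conjecture asserts that among Schinzel's exceptional identities, the unique coincidence producing a nontrivial factor of $P_j$ is the factor $(x+2)$, occurring exactly at $j=t$. Carrying this out requires a systematic pass through Schinzel's classification using the specific shape of $F_j$ (in particular, its linear term $(m-1)x$ and prescribed constant term $-m$), and the hypothesis $j\ge t-k$ is presumably needed to control one or more of the algebraic-identity cases uniformly in $j$.
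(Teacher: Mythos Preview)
This statement is a \emph{Conjecture} in the paper, presented in the concluding remarks without proof; there is no argument in the paper to compare against. Your ``if'' direction is correct and complete: since $t$ is odd, $P_t(-2)=0$, so $(x+2)\mid P_t$, and $\deg P_t=t\ge 3$ forces reducibility. Your setup for the ``only if'' direction is also sound: the triangle-inequality argument correctly shows that the only zero of $F_j$ in $|z|\le 1$ is $z=1$, so every zero of $P_j$ lies in $|z|>1$, $P_j$ has trivial reciprocal part, and $P_j$ is the nonreciprocal part of the trinomial $F_j(x)=x^{j+1}+(m-1)x-m$.

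The gap you yourself identify is genuine and is the whole difficulty. You dispose of the common-factor partitions and the exponent-reduction case correctly, but the algebraic-identity cases in Schinzel's trinomial classification are not handled---you only assert that they ``must be ruled out \ldots\ by a finite case analysis''. The paper's own remarks immediately preceding the conjecture explain why this is not routine: the author notes that Theorem~3 of \cite{SchinzelTri2001} can be used to manufacture sequences with a single reducible term, but ``the polynomials $f_1(x)$ have no set form using this approach \ldots\ and so they cannot be given explicitly.'' That is, the author has already considered exactly the trinomial route you propose and found that for a \emph{prescribed} constant term $m=2(2^t+1)/3$ the exceptional cases of Schinzel's theorem do not obviously collapse; had they, this would be stated as a theorem rather than a conjecture. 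Your proposal is a reasonable outline matching the framework the author has in mind, but it is not a proof: the substantive content of the conjecture resides precisely in the step you defer.
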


Corollary \ref{Cor:Cun-Infinite Length} can also be proven easily without the use of Theorem \ref{Thm:SchinzelQuad1}. In the case of $\epsilon=1$, let $p$ be a prime, and define $f_1(x):=px+1$. In the case of $\epsilon=-1$, let $c$ be any positive integer, define $f_1(x):=x-c$, and use the following result due to Alfred Brauer \cite{ABrauer}.
 \begin{thm}\label{Thm:Brauer}
   Let $f(x)=x^n-a_{n-1}x^{n-1}-a_{n-2}x^{n-2}-\cdots -a_1x-a_0$. If $a_{n-1}\ge \cdots \ge a_0>0$, then $f(x)$ is irreducible over $\Q$.
 \end{thm}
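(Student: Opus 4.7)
The plan is to pin down the root geometry of $f(x)$ and then use it to rule out nontrivial factorizations. I would work in three stages.

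First, I would show that $f$ has a unique positive real root $\rho$ and moreover $\rho > 1$. Writing $g(x) = f(x)/x^n = 1 - \sum_{i=0}^{n-1} a_i x^{i-n}$, one sees that $g$ is strictly increasing on $(0,\infty)$, so $f$ has at most one positive real zero. Since $f(1) = 1 - \sum a_i < 0$ (using $a_i \geq 1$ and $n \geq 2$; the case $n=1$ is trivial) and $f(x) \to +\infty$ as $x\to\infty$, this zero $\rho$ exists and exceeds $1$.

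Second, I would show that every root $\beta \neq \rho$ of $f$ satisfies $|\beta| < 1$. The key auxiliary polynomial is
\[F(x) := (x-1)f(x) = x^{n+1} - (1+a_{n-1})x^n + \sum_{i=0}^{n-2}(a_{i+1}-a_i)x^{i+1} + a_0.\]
Under the monotonicity hypothesis every coefficient of $F$ other than $-(1+a_{n-1})$ is nonnegative, and these nonnegative coefficients sum to precisely $1 + a_{n-1}$. On $|z|=1$ the triangle inequality therefore gives
\[\bigl|z^{n+1} + \sum_{i=0}^{n-2}(a_{i+1}-a_i)z^{i+1} + a_0\bigr| \leq 1+a_{n-1},\]
and tracing the equality condition---which requires the surviving monomials $z^{n+1}$, the $z^{i+1}$ with $a_{i+1}>a_i$, and the positive constant $a_0$ to share a common argument---forces first $z^{n+1}=1$ and then, via the original equation $F(z)=0$, also $z^n=1$, hence $z=1$. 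Thus $F$ has no zero on $|z|=1$ except the root $z=1$ coming from the $(x-1)$ factor. A Rouch\'e comparison of $F(z)$ with $-(1+a_{n-1})z^n$ on a slightly enlarged circle $|z|=1+\varepsilon$ then shows that $F$ has exactly $n$ zeros in $|z|<1+\varepsilon$; letting $\varepsilon \downarrow 0$ and removing the boundary root at $z=1$ leaves exactly $n-1$ zeros of $F$---equivalently of $f$---inside the open unit disk. Together with $\rho$, these account for all $n$ roots of $f$.

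Third, I would deduce irreducibility via the constant term. Suppose $f = gh$ with $g, h \in \Z[x]$ monic of positive degree. The simple real root $\rho$ lies in exactly one factor, say $g$, so every root of $h$ has modulus strictly less than $1$. Then $|h(0)|$, which equals the product of those moduli, is less than $1$; being a nonnegative integer it must vanish, forcing $f(0) = g(0)h(0) = 0$ and contradicting $f(0) = -a_0 < 0$.

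The main obstacle is the Rouch\'e step: the displayed bound is an equality at $z=1$, so a direct application of Rouch\'e on the unit circle fails. I would handle this by moving to $|z|=1+\varepsilon$ and comparing the first-order expansions in $\varepsilon$ of the two sides, using $\sum(i+1)(a_{i+1}-a_i) \leq (n-1)(a_{n-1}-a_0)$ to verify that the strict inequality persists on the enlarged circle for all sufficiently small $\varepsilon>0$.
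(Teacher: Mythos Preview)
The paper does not prove this theorem; it merely quotes it from Brauer's 1951 paper and uses it as a tool. So there is no ``paper's own proof'' to compare against. That said, your argument is correct and is in fact essentially Brauer's original approach, as the very title of his paper (``On algebraic equations with all but one root in the interior of the unit circle'') suggests: establish that $f$ has exactly one root outside the closed unit disk and $n-1$ roots strictly inside, then conclude via the integrality of the constant term of any putative monic factor.

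A few small remarks. Your root-location argument is clean: the equality analysis on $|z|=1$ is exactly right, since $a_0>0$ forces the common argument to be zero, giving $z^{n+1}=1$, and then $F(z)=0$ forces $z^n=1$, hence $z=1$. For the Rouch\'e step, the first-order comparison goes through; in fact the telescoping identity
\[
\sum_{i=0}^{n-2}(i+1)(a_{i+1}-a_i)=(n-1)a_{n-1}-\sum_{j=0}^{n-2}a_j
\]
makes the derivative condition reduce to $1-\sum_{j=0}^{n-1}a_j<0$, which is immediate for $n\ge 2$ with integer $a_j\ge 1$. Your coarser bound $\sum(i+1)(a_{i+1}-a_i)\le (n-1)(a_{n-1}-a_0)$ also suffices. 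Finally, note that the concluding step genuinely uses $a_i\in\Z$ (so that $h(0)\in\Z$); this is implicit in Brauer's statement and in the paper's context, though not written explicitly in the displayed theorem.
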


\end{document}